\newcommand{\bbm}{\begin{boldmath}}
\newcommand{\ebm}{\end{boldmath}}
\newcommand{\lt}{<}
\newcommand{\gt}{>}
\newtheorem{theo}{Theorem}[section]
\newtheorem{cor}{Corollary}[section]
\newtheorem{lem}{Lemma}[section]
\newtheorem{prop}{Proposition}[section]
\newtheorem{defy}{Definition}[section]
\newtheorem{mex}{Example}[section]
\newtheorem{conjecture}{Conjecture}[section]
\newtheorem{remk}{{\bf Remark:}}[section]
   {\begin{remk}\begin{normalshape}\mbox{}}%
   {\hfill  \end{normalshape}\end{remk}}%
\tikzstyle{level 1}=[sibling distance=40mm]
\tikzstyle{level 2}=[sibling distance=30mm]
\tikzstyle{level 3}=[sibling distance=20mm]
\tikzstyle{level 4}=[sibling distance=10mm]
\begin{document}
\begin{center}
%{\Large {\bf The Fundamental Theorem of Calculus with Gossamer numbers - id:revision 10:5}} \\
{\Large {\bf The Fundamental Theorem of Calculus with Gossamer numbers}} \\
\vspace{0.5cm}
{\bf Chelton D. Evans and William K. Pattinson} 
%{\bf RMIT University, GPO Box 2476V}\\
%{\bf Melbourne, Victoria 3001, Australia}\\
\end{center}
\begin{abstract}
 Within the gossamer numbers $*G$ which extend $\mathbb{R}$ to
 include infinitesimals and infinities we prove
 the Fundamental Theorem of Calculus (FTC).
 Riemann sums are also considered in $*G$,
 and their non-uniqueness at infinity.
 We can represent the sum as a continuous function in $*G$
 by inserting infinitesimal intervals at the discontinuities,
 and threading curves between the
 sums discontinuities.
 As the FTC is a difference of integrals at the end points, the same 
 is true for sums.
\end{abstract}
\section{Introduction}\label{S01}
\ref{S01}. Introduction \\
\ref{S02}. Riemann sums \\
\ref{S03}. Fundamental Theorem of Calculus (FTC) \\
\ref{S04}. Heavyside function and integration \\
\ref{S05}. FTC for sums \\

 While the heavyside step function is well known, 
 it can also be represented
 using the Iverson bracket
 notation (see \cite[p.24]{graham})
 where the logical statement evaluates to $1$ if true, else $0$.
 The prime-number function could be expressed as
 $\pi(x) = [x \geq 2] + [x \geq 3] + [x \geq 5] + [x \geq 7] + \ldots$ 
 As the name ``step" suggests, a discontinuity exists between
 the levels. 

 We consider a step function's continuous representation
 with the gossamer numbers \cite{cebp11}.
 The discontinuity in $\mathbb{R}$ (or $*G$) is replaced
 by a continuous function in $*G$ over an infinitesimal interval. 

 A discontinuous curve in $\mathbb{R}$ can become a continuous 
 curve in $*G$.
 Since we have theory for continuous curves such as FTC,
 having a theory that allows the transfer from the discontinuous
 to the continuous and back is useful. 
 A continuous representation is advantageous as we gain the benefits
 of continuity and classical calculus in $*G$.

 The next benefit of working in $*G$ is the extension of
 the interval to include infinitesimals and infinities,
 thereby bypassing the need to extend $\mathbb{R}$,
 and potentially representing the theory in a simpler way.
 For example, theory may encompass both the finite and
 the infinite intervals and not need or reduce to special cases.
 
 The fundamental theorem of
 calculus in $\mathbb{R}$ is
 stated for a finite interval, and extended
 to improper integrals. The Lebesgue
 measure and integral are a generalization,
 and are not limited by the real numbers
 not having infinitely small and large numbers,
 well defined.
 We see $*G$ as possibly another way. 
 Our understanding is that Non-Standard Analysis (NSA) is also
 a two-tiered calculus and also has the benefits of the infinitesimals and infinities.
 Having an extended domain rather than extending the reals is beneficial.

 Consequently, we believe the fundamental theorem
 of calculus in $*G$ is better than otherwise extending
 calculus for specific cases.
\section{Riemann sums}\label{S02}
 The typical way of visualising
 a Riemann sum is to partition a function into bars,
 where as the columns become infinitesimal,
 the sum of the bars evaluates to the area under the
 curve.

 Contrast this method of integration with Archimedes' method of exhaustion
 \cite{archimedes}.
 We give an example where the area is underestimated,
 but at infinity is equal to the area of the circle.
\bigskip
\begin{mex}\label{MEX002}
 Consider a function of straight lines
 through an ordered list of points on a circle about the origin
 ($0$ to $2\pi$). 
 At infinity, having
 the points cover the circumference,
 the function length equals the circle's circumference.

 An iteration of the sum follows.
  A point is inserted into the ordered
 sequence of points about origin,
 and the function is subsequently updated. 

 With the insertion of each additional point,
 a construction of two triangles' area
 is added to a cumulative sum.
 In this way the circle's area is being
 integrated. 
 Starting with a circle and an inscribed square,
 points are added indefinitely and the area of
 the circle is calculated.
\end{mex}
\bigskip
\begin{mex}\label{MEX003}
A Riemann sum of the above,
 for any line segment of the function forms a triangle
 with the origin.
 
 From a numerical point of view, in the Riemann sum
 the thin isosceles triangles will have 
 roundoff errors
 associated with the sum. However, if the sum can be evaluated symbolically
 then this may not be an issue.
 [ In contrast, the method of exhaustion maintains the triangles shape, even
 for the infinitely small ] 
\end{mex}

 The above are examples of definite integrals, 
 of which the Riemann sum was investigated as a theory of the sum of integrals.
 We consider without loss of generality, positive Riemann sums in $*G$.
\bigskip
\begin{defy}\label{DEF005}
 A uniform Riemann sum in $*G$; $\nu \in \mathbb{N}_{\infty}$;
\[ \sum_{j=1}^{\nu} f(\frac{j}{\nu}) \frac{1}{\nu}|_{\nu=\infty} \]
\end{defy}

 As a uniform partition, with arbitrary infinitely small partitions,
 can divide any
 interval with an infinitely small width.
 Since there is no smallest number,
 and there is no
 smallest interval,
 then we can change the interval width.
\bigskip
\begin{mex}\label{MEX004}
 $\sum_{j=1}^{n} f(j)\frac{1}{n}|_{n=\infty}$
 has $1 \ldots n-1$ steps and width $\frac{1}{n}|_{n=\infty}$.
 $\sum_{j=1}^{n^{2}}f(\frac{j}{n^{2}})\frac{1}{n^{2}}|_{n=\infty}$
 has a $1 \ldots n^{2}-1$ steps and  width $\frac{1}{n^{2}}|_{n=\infty}$.
 $\sum_{j=1}^{n!} f(\frac{j}{n!}) \frac{1}{n!}|_{n=\infty}$
 are 
 uniform Riemann
 sums.
\end{mex}

 Another visual
  way to understand a Riemann sum and integral is 
 to consider the Riemann sum \textit{not} over a finite interval $[a,b]$, 
 but as an infinite positive interval in $*G$.
 The columns and area under the graph are asymptotic,
 which we describe with the asymptotic relation $~$.
\bigskip
\begin{defy}\label{DEF004}
 We say a uniform Riemann sum (Definition \ref{DEF005}) is uniform Riemann integrable; $\nu \in \mathbb{N}_{\infty}$:  
\[ \int_{j}^{j+1} f(\frac{x}{\nu}) \,dx \sim f(\frac{j}{\nu}) \]
\[ \int_{0}^{\nu} f(\frac{x}{\nu})\,dx \sim \sum_{j=1}^{\nu} f(\frac{j}{\nu}) |_{\nu=\infty} \]
\end{defy}
\bigskip
\begin{remk}
Consider
 $\int_{0}^{\nu} f(\frac{x}{\nu})\frac{1}{\nu}\,dx \sim \sum_{j=1}^{\nu} f(\frac{j}{\nu}) \frac{1}{\nu} |_{\nu=\infty}$,
 $\int_{0}^{\nu} f(\frac{x}{\nu}) \frac{d(\frac{x}{\nu})}{dx} \,dx \sim \sum_{j=1}^{\nu} f(\frac{j}{\nu}) \frac{1}{\nu} dj |_{\nu=\infty}$
 where $dj$ is a
 change in integers (see \cite{cebp4}), $dj = (j+1)-j=1$; we see the one to one
 correspondence of the Riemann sum to the integral, as the
 columns are asymptotic to the integral between integer values.
 With this we can interpret between
 a continuous change
 and a discrete change
 of variable.
\end{remk}

 These conditions arise from transforming
 the definite integral
 to the uniform Riemann series, and vice versa. 

 However, in the transformation, while the definite integral 
 $\int_{0}^{1}f(x)\,dx$ is used with the chain
 rule, this does not exclude indefinite
 integral evaluation as the following example
 shows.
\bigskip
\begin{mex}\label{MEX005}
 Integrate the divergent integral
 $\int n^{2}\,dn = \frac{1}{3}n^{3}|_{n=\infty}$
 with a Riemann sum.

 We will use the closed form formula for the sum of
 squares, $\sum_{k=1}^{n}k^{2} = \frac{2n^{3}+3n^{2}+n}{6}$,
 for convenience we can find this using a symbolic 
 maths package: in Maxima `$\,\mathrm{sum(k^2,k,1,n),simpsum;}$'.

 $\int_{1}^{n} x^{2}\,dx|_{n=\infty}$
 $=\int_{\frac{1}{n}}^{1} (nx)^{2} d(nx)|_{n=\infty}$
 $=n^{3} \int_{0}^{1} x^{2}\,dx|_{n=\infty}$.
 Evaluate the definite integral with a finite
 uniform Riemann sum,
 $\int_{0}^{1} x^{2}\,dx$
 $= \sum_{k=1}^{\nu} (\frac{k}{\nu})^{2} \frac{1}{\nu}|_{\nu=\infty}$,
 $= \frac{1}{\nu^{3}} \sum_{k=1}^{\nu} k^{2}|_{\nu=\infty}$
 $= \frac{1}{\nu^{3}} \frac{2\nu^{3}+3\nu^{2} + \nu}{6}|_{\nu=\infty}$
 $= \frac{1}{\nu^{3}} \frac{2\nu^{3}}{6}|_{\nu=\infty}$
 $= \frac{1}{3}$, then
 $\int_{1}^{n} x^{2}\,dx|_{n=\infty}$
 $= \frac{1}{3}n^{3}|_{n=\infty}$.
\end{mex}
 
 To understand Riemann sums better,
 we consider some of the limitations
 of the Riemann sum and integral \cite{riemannlimitations}.

 That the usability for an infinite domain is limited.
 This may well be true, as a Riemann sum is a primitive
 sum.
 However, the Riemann sum can be constructed in $*G$,
 extending the domain to include infinitesimals and
 infinities. 
 Example \ref{MEX005} shows that with scalings, a divergent
 integral can be summed.
 Thus, the possibility of
 addressing the limitation
 of the infinite bounds exists.

 Another limitation of the Riemann sum and
 integral such
 as not being able to integrate
 over the enumeration of rational numbers
 in $[0,1]$, does not invalidate
 the sum for other purposes.  

 Concerns regarding the functions being
 summed, are problem or domain dependent.

 If we restrict the Riemann sum to continuous
 functions in $\mathbb{R}$ and $*G$ the scope
 is still large, particularly in light
 of transference \cite[Part 4]{cebp17}. 

 For example, consider
 a Riemann sum to a finite value.
 While we consider the
 Riemann sum native to $*G$  
 (because this includes infinitesimals and infinities),
 as a two-tiered calculus
 we can have a uniform Riemann sum equal
 to a Riemann sum at infinity after transference (see Conjecture \ref{P007}).
\[ \sum_{j} \int f_{j}(x)\,dx \sim \int \sum_{j} f_{j}(x)\,dx \]
 Interchanging limit processes we believe is
 more fundamental. However, we see mathematics
 with infinity, as the necessary way forward.
 Orderings of variables reaching infinity before
 others is a much larger question, but
 one that has to be asked 
 (See \cite[Part 6 A two-tiered calculus]{cebp20}).
 See our use of interchanging point evaluations in
 the proof of Theorem \ref{P017}.

 Assuming continuous functions,
 uniform Riemann sums are preferred, because they are easier to
 calculate with. 
 Hence, while the Riemann sum may appear more
 general, for the infinite partitions which are important,
 they are actually equivalent. That is, their definition
 is no more general for the infinite case than the uniform partition.
\bigskip
\begin{conjecture} \label{P007}
A uniform Riemann sum of a continuous curve
 is 
 asymptotic 
 to 
 a Riemann sum with infinite 
 limit.
\end{conjecture}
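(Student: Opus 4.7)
The plan is to show both types of sum are asymptotic to the same definite integral, and then use transitivity of the asymptotic relation $\sim$. Let $f$ be continuous on the interval of summation. First I would fix a general Riemann sum with partition $0 = x_0 < x_1 < \cdots < x_N = L$, where $N \in \mathbb{N}_\infty$, all mesh widths $\Delta x_k = x_k - x_{k-1}$ are infinitesimal, and $L$ is possibly in $*G$. I would compare this sum bar-by-bar to $\int_0^L f(x)\,dx$ using the continuity of $f$: on each subinterval $[x_{k-1}, x_k]$, the oscillation $\omega_k = \sup_{[x_{k-1},x_k]}f - \inf_{[x_{k-1},x_k]}f$ is infinitesimal by transference of uniform continuity on closed finite subintervals, so $\int_{x_{k-1}}^{x_k} f(x)\,dx \sim f(\xi_k)\Delta x_k$ for any sample point $\xi_k \in [x_{k-1},x_k]$. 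Summing across $k$ gives the Riemann-sum-integrable relation analogous to Definition \ref{DEF004}.

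Next I would do the same for the uniform Riemann sum $\sum_{j=1}^{\nu} f(j/\nu)\cdot (1/\nu)$, applying Definition \ref{DEF004} directly, so that it is asymptotic to $\int_0^1 f(x)\,dx$ (after the usual change of variable $x \mapsto x/\nu$ converting $\int_0^{\nu} f(x/\nu)\,dx$ to the integral over $[0,1]$ times $\nu$, as illustrated in Example \ref{MEX005}). Transitivity of $\sim$ then yields the asymptotic equality of the uniform sum and the general sum. For the finite-interval case this is the whole story.

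For the case where the interval itself extends to infinity in $*G$, I would use the two-tiered structure and iterate: first fix an outer infinite bound $L$, produce asymptotic equality there, then let $L \to \infty$ through the gossamer ordering. This is where the main obstacle lies; the comparison requires the two infinite processes (mesh $\to 0$ and bound $\to \infty$) to be ordered consistently, exactly the phenomenon flagged earlier in the paper (``Orderings of variables reaching infinity before others \ldots'' with a reference to \cite[Part 6]{cebp20}). The cleanest way is to require the uniform mesh $1/\nu$ to be infinitesimal relative to the reciprocal of $L$, so that $L/\nu$ is still infinitesimal; under that ordering the bar-by-bar comparison above still goes through, and the sum and integral remain asymptotic.

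Finally I would remark that the conjecture is stated with ``asymptotic to'', not ``equal to'', which is essential: the non-uniqueness of Riemann sums at infinity (advertised in the abstract and in Example \ref{MEX004}, where $\nu$, $\nu^2$, $\nu!$ all give valid uniform partitions) means two Riemann sums can differ by an infinitesimal quantity of the same leading order as the integral's correction terms, and $\sim$ is precisely the relation that absorbs this ambiguity. The continuity hypothesis is used exactly once, to guarantee the infinitesimal oscillation on each subinterval; dropping it would be the obvious direction in which the conjecture fails.
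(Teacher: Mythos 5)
The paper offers no proof of this statement at all: it is stated as Conjecture \ref{P007} and left open, so there is nothing to compare your argument against. Your overall strategy --- show that both the uniform sum and the general Riemann sum are asymptotic to the same integral and invoke transitivity of $\sim$ --- is the natural one and is consistent with the machinery the paper does provide (Definition \ref{DEF004}, Proposition \ref{P003}). But the proposal has a genuine gap at its central step, the passage from the bar-by-bar relation $\int_{x_{k-1}}^{x_k} f(x)\,dx \sim f(\xi_k)\Delta x_k$ to the summed relation. The relation $\sim$ is not additive over an infinite index set: from $a_k \sim b_k$ for each of $N \in \mathbb{N}_\infty$ values of $k$ one cannot conclude $\sum a_k \sim \sum b_k$ unless the relative errors $\epsilon_k$ with $a_k = b_k(1+\epsilon_k)$ are \emph{uniformly} infinitesimal, i.e.\ unless $\sup_k |\epsilon_k|$ is itself infinitesimal. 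You gesture at this with ``transference of uniform continuity on closed finite subintervals,'' but the paper establishes no overspill, internality, or saturation principle for $*G$ that would let you take a supremum over infinitely many infinitesimal oscillations and certify that the result is still in $\Phi$; in NSA this is exactly where internal sets and the permanence principles do the work, and $*G$ as presented has no announced substitute. You also need the positivity restriction (which the paper imposes in Section \ref{S02}) to bound $\sum b_k \epsilon_k$ by $(\sup_k|\epsilon_k|)\sum b_k$; without it the error sum can be of the same order as the integral even with uniformly infinitesimal $\epsilon_k$ of alternating sign. Finally, your treatment of the unbounded-interval case correctly identifies the ordering-of-limits obstruction but does not resolve it; imposing ``$L/\nu$ infinitesimal'' is a restriction on which Riemann sums you compare, not a proof that an arbitrary ``Riemann sum with infinite limit'' in the conjecture's sense satisfies it. As it stands the proposal is a plausible reduction of the conjecture to two unproved lemmas (uniform infinitesimality of the oscillations, and additivity of $\sim$ over infinite positive sums), not a proof.
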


 In forming a Riemann sum \cite{riemannhowto} advised 
 to express the sum in a particular way and apply 
 Corollary \ref{P006}.
\bigskip
\begin{prop}\label{P003}
 With the Riemann integral Definition \ref{DEF004},
 there exists $\nu|_{n=\infty} \in \mathbb{N}_{\infty}$;
 $c \in *G$; 
 $c \prec \sum_{j=1}^{\nu} f(\frac{j}{\nu})\frac{1}{\nu}|_{n=\infty}$
 and $c \prec \int_{0}^{1} f(x)\,dx$:
\[  \sum_{j=1}^{\nu} f(\frac{j}{\nu})\frac{1}{\nu}|_{n=\infty} = \int_{0}^{1} f(x)\,dx + c \]
\end{prop}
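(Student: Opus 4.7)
The plan is to observe that Proposition \ref{P003} is essentially a repackaging of Definition \ref{DEF004}: the relation $a \sim b$ in $*G$ is, by construction, equivalent to the statement $a = b + c$ with $c \prec a$ and $c \prec b$. So once both sides of Definition \ref{DEF004} are transformed into $\int_0^1 f(x)\,dx$ and $\sum_{j=1}^{\nu} f(j/\nu)\frac{1}{\nu}$ respectively, the desired $c$ exists automatically; the work is entirely in performing that transformation.

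First I would start from the second asymptotic relation in Definition \ref{DEF004}, namely $\int_{0}^{\nu} f(x/\nu)\,dx \sim \sum_{j=1}^{\nu} f(j/\nu)|_{\nu=\infty}$, and apply the change of variable $u = x/\nu$ to the left-hand side. With $dx = \nu\,du$ and the endpoints $0,\nu$ mapping to $0,1$, the integral becomes $\nu \int_0^1 f(u)\,du$, so the relation takes the form $\nu \int_0^1 f(u)\,du \sim \sum_{j=1}^{\nu} f(j/\nu)|_{\nu=\infty}$.

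Next I would divide both sides by the positive infinite $\nu$. Because $\sim$ is preserved under multiplication and division by nonzero quantities in $*G$, this yields $\int_0^1 f(u)\,du \sim \sum_{j=1}^{\nu} f(j/\nu)\frac{1}{\nu}|_{\nu=\infty}$. Finally, unpacking this $\sim$-relation as an equality with remainder gives exactly the claimed identity with $c := \sum_{j=1}^{\nu} f(j/\nu)\frac{1}{\nu} - \int_0^1 f(x)\,dx$, and from the definition of $\sim$ this $c$ satisfies $c \prec \int_0^1 f(x)\,dx$ and $c \prec \sum_{j=1}^{\nu} f(j/\nu)\frac{1}{\nu}$ as required. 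The $\nu$ in the statement is then any element of $\mathbb{N}_{\infty}$ admissible in Definition \ref{DEF004}.

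The main obstacle, to the extent there is one, is verifying that the two manipulations used — the change of variable $x = \nu u$ across an infinite upper limit, and division of a $\sim$-relation by the infinite scalar $\nu$ — are legitimate in $*G$. Both are standard operations in $\mathbb{R}$ and should transfer to $*G$ for continuous $f$ via the transference principle of \cite[Part 4]{cebp17}; the one place where care is needed is confirming that the error term $c_0$ in the $\sim$-relation $\nu\int_0^1 f = \sum f(j/\nu) + c_0$ actually satisfies $c_0/\nu \prec \int_0^1 f$, which follows because $\prec$ is invariant under scaling by the same positive factor on both sides. Once these two facts are in hand the proposition reduces to a single change of variable.
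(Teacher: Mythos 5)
Your proposal is correct and follows essentially the same route as the paper: the paper's entire proof is a one-line citation of the fact that $a \sim b$ gives $a = b + c$ with $c \prec a$ and $c \prec b$ (Part~5, Proposition~2.1 of the cited series), applied to the relation in Definition~\ref{DEF004}. You additionally spell out the change of variable and the division by $\nu$ needed to pass from the form in Definition~\ref{DEF004} to the form in the statement, which the paper leaves implicit.
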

\begin{proof}
 Follows $a \sim b$ then there exists $c$:
 $a+c = b$,
 $a \succ c$ and $b \succ c$
 \cite[Part 5, Proposition 2.1]{cebp15}.
\end{proof}
\bigskip
\begin{cor}\label{P006} A Riemann sum may be evaluated by the following.
\[ \lim_{n \to \infty} \sum_{j=1}^{n} f(\frac{j}{n}) = \int_{0}^{1} f(x)\,dx \]
\end{cor}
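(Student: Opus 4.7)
The plan is to derive this as an immediate consequence of Proposition \ref{P003}. That proposition already furnishes, at an infinite index $\nu \in \mathbb{N}_\infty$, the $*G$-level identity $\sum_{j=1}^{\nu} f(\frac{j}{\nu})\frac{1}{\nu} = \int_0^1 f(x)\,dx + c$ with a remainder $c$ dominated in the $\prec$ sense by both sides. So the task is not a fresh estimate but a translation: reading the classical $\lim_{n\to\infty}$ on the left as the two-tiered projection from $*G$ back to $\mathbb{R}$, which by construction discards any term strictly smaller than the finite target value.

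Concretely, first I would fix $\nu = n|_{n=\infty} \in \mathbb{N}_\infty$ and invoke Proposition \ref{P003} to obtain the $*G$ equality with its infinitesimal remainder $c$. Second I would appeal to the transference principle of \cite[Part 4]{cebp17}, referenced in Section \ref{S02}: a $*G$ identity of the form $A_\nu = B + c$ with $B \in \mathbb{R}$ finite and $c \prec B$ collapses under the standard-part map to the classical limit statement $\lim_{n\to\infty} A_n = B$. Third I would read this off with $A_n = \sum_{j=1}^{n} f(\frac{j}{n})\frac{1}{n}$ and $B = \int_0^1 f(x)\,dx$ to conclude. Since the choice of $\nu$ is arbitrary among $\mathbb{N}_\infty$, the limit does not depend on the particular representative of infinity used.

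The main obstacle is purely bookkeeping: checking that the two-tiered passage from $*G$ to $\mathbb{R}$ really does send the $\prec$-negligible $c$ to zero in the standard limit, and that independence from the chosen infinite $\nu$ is guaranteed. Under the gossamer conventions both are automatic, so once Proposition \ref{P003} is in hand no new computation is required. I would also flag that the displayed corollary appears to be missing the $\frac{1}{n}$ factor inside the sum; the proof I propose is for the standard Riemann-sum version with that factor restored, which is the form consistent with Proposition \ref{P003} and Definition \ref{DEF005}.
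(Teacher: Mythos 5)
Your proposal is correct and follows essentially the route the paper intends: the paper states this corollary without proof, positioning it as an immediate consequence of Proposition \ref{P003}, and your argument --- take the $*G$ identity $\sum_{j=1}^{\nu} f(\frac{j}{\nu})\frac{1}{\nu} = \int_0^1 f(x)\,dx + c$ and discard the $\prec$-negligible remainder $c$ under the transfer back to $\mathbb{R}$ --- is exactly that consequence made explicit. You are also right to flag that the displayed statement omits the $\frac{1}{n}$ factor inside the sum; the form consistent with Definition \ref{DEF005} and Proposition \ref{P003} requires it.
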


 However, this does not describe the transition between sum and integral,
 although it is the most practical approach to evaluating a Riemann sum.
 We discuss the transition between Riemann sums and integrals in both
 directions, in the context of sum and integral scaling
 and shifting. 
\bigskip
\begin{prop} \label{P014}
 $c, x \in *G$;
 When $c$ is constant, $d(x+c) = dx$ 
\end{prop}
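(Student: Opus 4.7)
The plan is to unpack the differential $d$ directly from its definition on $*G$, then use the assumption that $c$ is constant to collapse the constant contribution to zero, exactly as in classical calculus, but interpreted in the extended number system.

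First, I would recall that in $*G$ the differential $d(\cdot)$ acts on a variable $x$ by $dx = (x + \delta) - x = \delta$ for an appropriate (infinitesimal or otherwise) increment $\delta$, and that this operation is linear: for any expressions $u, v$ in $*G$, $d(u+v) = du + dv$. This linearity is purely formal and follows from the definition of the increment, so it is available without further hypothesis.

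Next, I would apply linearity to the expression $x+c$ to obtain $d(x+c) = dx + dc$. The core step is then to observe that because $c$ is constant, the increment of $c$ vanishes: $dc = (c+\text{shift}) - c$, but the statement ``$c$ is constant'' is precisely the statement that $c$ does not depend on the varying parameter driving $d$, so the two occurrences of $c$ cancel, giving $dc = 0$ in $*G$. Substituting back yields $d(x+c) = dx + 0 = dx$.

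The only real subtlety, and therefore the main thing to argue carefully, is the meaning of ``constant'' in $*G$: a gossamer number may itself be infinite or infinitesimal, yet still be constant with respect to the variable being differentiated. I would make this explicit by pointing out that the differential is taken with respect to $x$, and ``$c$ constant'' means $c$ has no functional dependence on $x$; with this clarification the step $dc = 0$ is immediate and the rest of the argument is a one-line formal manipulation.
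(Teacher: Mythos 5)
Your proof is correct, but it takes a slightly different route from the paper's. The paper substitutes $u = c + x$, computes the derivative $\frac{du}{dx} = 1$, and then invokes the identity $d(c+x) = du = \frac{du}{dx}\,dx = dx$; in other words, it reduces the statement about differentials to a statement about a derivative. You instead treat $d$ as an increment operator, appeal to its additivity $d(u+v) = du + dv$, and kill the constant term via $dc = 0$. Both arguments are one-line formal manipulations resting on unstated but standard properties of the differential in $*G$: the paper presupposes the chain-rule form $du = \frac{du}{dx}\,dx$, while you presuppose linearity of $d$ and the vanishing of the differential of a constant. Your version has the small advantage of making explicit the one genuinely $*G$-specific subtlety --- that ``constant'' means independent of the varying parameter, even though $c$ may itself be an infinitesimal or an infinity --- which the paper leaves implicit; the paper's version has the advantage of matching the mechanism it reuses immediately afterwards for the companion result $d(\alpha x) = \alpha\,dx$, which is also proved by computing $\frac{dv}{dx}$ and multiplying by $dx$. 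Either argument would be accepted here.
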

\begin{proof}
  Let $u=c+x$, $\frac{du}{dx}=1$, $d(c+x) = du = \frac{du}{dx}dx = dx$
\end{proof}
\bigskip
\begin{prop}
 When $\alpha$ is constant, $d(\alpha x) = \alpha dx$
\end{prop}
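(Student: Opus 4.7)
The plan is to mirror the proof of Proposition \ref{P014} exactly, since the statement is its multiplicative counterpart. I would introduce the substitution $u = \alpha x$ and then differentiate with respect to $x$, treating $\alpha$ as a constant so that $\frac{du}{dx} = \alpha$. From there, the gossamer differential satisfies $d(\alpha x) = du$, and applying $du = \frac{du}{dx}\, dx$ yields $\alpha\, dx$.

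The key steps in order are: (i) set $u = \alpha x$; (ii) compute $\frac{du}{dx} = \alpha$ using that $\alpha$ is a $*G$-constant (so its derivative vanishes); (iii) invoke the identification $d(\alpha x) = du$; (iv) conclude $du = \alpha\, dx$. No new machinery is required beyond what was used in Proposition \ref{P014}: the chain-rule style manipulation of the symbol $d$ in $*G$ is assumed to carry over from the real case via transference, exactly as in the proof of the previous proposition.

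The only potentially subtle point, and hence the main thing to be careful about, is whether $\alpha$ is allowed to be a non-real gossamer constant (for instance an infinitesimal or an infinite element of $*G$) rather than a real scalar. Since the statement says only that $\alpha$ is constant and the paper works in $*G$, I would read this as $\alpha \in *G$ with $\frac{d\alpha}{dx} = 0$, which is precisely what is needed for the product/chain rule step to give $\frac{d(\alpha x)}{dx} = \alpha$. No other obstruction appears, and the proof is a one-line substitution.
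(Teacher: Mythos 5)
Your proof is correct and is essentially identical to the paper's: the paper also sets $v = \alpha x$ and writes $dv = \frac{dv}{dx}dx = \alpha\, dx$. Your extra remark about $\alpha$ being an arbitrary $*G$-constant with vanishing derivative is a reasonable reading consistent with the paper's conventions, but it adds nothing the paper's one-line argument does not already contain.
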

\begin{proof}
Let $v = \alpha x$, $dv = \frac{dv}{dx}dx = \alpha dx$
\end{proof}

 If we apply the inverse operation
 in the integrand to
 within the integral,
 for
 operators $+$ and $\times$
 we can 
 shift and scale the integral.
 This can be shown by substitution.
\bigskip
\begin{theo} Scaling the integral; $a, b, \alpha, \nu, x, f \in *G$;
\[ \int_{a}^{b} f(x)\,dx = \int_{\frac{a}{\alpha}}^{\frac{b}{\alpha}} f(\alpha \nu) \, d(\alpha \nu) \]
\end{theo}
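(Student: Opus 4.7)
The plan is to prove this by direct substitution, using the immediately preceding proposition that $d(\alpha x) = \alpha\,dx$ for constant $\alpha$. I would set $x = \alpha \nu$, so that $\nu = x/\alpha$, and observe that the right-hand side is what the integral on the left becomes under this change of variable.

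The steps I would carry out, in order, are as follows. First, I would introduce the substitution $x = \alpha \nu$ and compute the new limits: when $x = a$ we have $\nu = a/\alpha$, and when $x = b$ we have $\nu = b/\alpha$, which matches the limits on the right-hand side. Second, I would apply the scaling proposition to get $d(\alpha \nu) = \alpha \, d\nu$, so that the integrand $f(\alpha \nu)\, d(\alpha \nu)$ is literally equal to $f(x)\, dx$ after substitution. Third, I would assemble these pieces and invoke the usual substitution rule for integrals to conclude
\[
\int_{a}^{b} f(x)\,dx \;=\; \int_{a/\alpha}^{b/\alpha} f(\alpha \nu)\,\alpha\, d\nu \;=\; \int_{a/\alpha}^{b/\alpha} f(\alpha \nu)\, d(\alpha \nu).
\]

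The main obstacle, such as it is, lies less in the algebra than in justifying that the substitution rule carries over to $*G$ when $\alpha$, $a$, or $b$ may be infinitesimal or infinite. The calculation itself is a one-line chain-rule manipulation, essentially identical to the proof of the previous proposition, so it should be handled by a brief appeal to the substitution rule and the differential identity just established, rather than unpacking Riemann sums or definitions from scratch. I would therefore keep the proof to two or three lines, mirroring the style of the preceding propositions in this section.
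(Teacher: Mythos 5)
Your proposal matches the paper's proof essentially line for line: the same substitution $x = \alpha\nu$, the same computation of the new limits, and the same use of the preceding proposition to rewrite $\alpha\,d\nu$ as $d(\alpha\nu)$. No meaningful difference in approach.
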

\begin{proof}
 $\int_{a}^{b} f(x)\,dx$.
 Let $x = \alpha \nu$.
 $x=a$ then $\nu = \frac{a}{\alpha}$,
 $x=b$ then $\nu = \frac{b}{\alpha}$.
 $\int_{a}^{b} f(x)\,dx$
 $= \int_{\frac{a}{\alpha}}^{\frac{b}{\alpha}} f(\alpha \nu) \frac{dx}{d \nu}d \nu$
 $= \alpha \int_{\frac{a}{\alpha}}^{\frac{b}{\alpha}} f(\alpha v) d\nu$
 $= \int_{\frac{a}{\alpha}}^{\frac{b}{\alpha}} f(\alpha \nu) d(\alpha \nu)$
\end{proof}
\bigskip
\begin{theo} Shifting the integral; $a, b, \alpha, \nu, x, f \in *G$;
\[ \int_{a}^{b} f(x)\,dx = \int_{a-c}^{b-c}f(x+c)\,dx \]
\end{theo}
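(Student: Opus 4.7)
The plan is to mirror the proof of the Scaling theorem above, applying a change of variables together with Proposition~\ref{P014}. Concretely, I would substitute $u = x - c$ (equivalently $x = u + c$) inside the left-hand integral $\int_a^b f(x)\,dx$.

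First I would translate the endpoints: when $x = a$ we have $u = a - c$, and when $x = b$ we have $u = b - c$. Next I would invoke Proposition~\ref{P014}, which states $d(u+c) = du$ for constant $c$, to identify $dx$ with $du$. Substituting yields
$$\int_a^b f(x)\,dx = \int_{a-c}^{b-c} f(u+c)\,d(u+c) = \int_{a-c}^{b-c} f(u+c)\,du,$$
and renaming the dummy variable $u$ back to $x$ produces the displayed identity.

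There is essentially no analytical obstacle here beyond being careful with the direction of the substitution and the fact that $c \in *G$ may be infinitesimal or infinite. The worth-emphasising point is that Proposition~\ref{P014} was stated for an arbitrary constant in $*G$, so the identity $dx = du$ holds uniformly and no case distinction between finite, infinitesimal, or infinite shifts is required. The entire argument is a direct symbolic substitution, structurally identical to (and slightly simpler than) the scaling proof, and no appeal to transference is needed beyond what is already built into the differential calculus on $*G$.
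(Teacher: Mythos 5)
Your substitution $x = u + c$ with endpoints $a-c$, $b-c$ and the identification $d(u+c)=du$ via Proposition~\ref{P014} is exactly the paper's own argument (the paper writes $v$ for your $u$ and uses $\frac{dx}{dv}=1$ in place of an explicit citation of the proposition). The proposal is correct and takes essentially the same approach.
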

\begin{proof}
 $\int_{a}^{b} f(x)\,dx$.
 Let $x = v + c$, 
 $x=a$ then $v= a-c$,
 $x=b$ then $v=b-c$.
 $\int_{a}^{b} f(x)\,dx$
 $=\int_{a-c}^{b-c} f(v+c)\frac{dx}{dv}dv$
 $=\int_{a-c}^{b-c} f(v+c)dv$

The same result could be achieved by shifting the integrand arguments, and
 shifting in the opposite direction the variable in the body. 
 $\int_{a}^{b} f(x)\,dx$
 $= \int_{a-c}^{b-c}f(x+c)\,d(x+c)$
 $= \int_{a-c}^{b-c}f(x+c)\,dx$
\end{proof}
\bigskip
\begin{prop}\label{P008}
 A definite integral of a continuous function in $*G$
  can be transformed to a Riemann sum,
 and vice versa, if
 uniform Riemann integrable.
$\nu \in \mathbb{N}_{\infty}$; $f, x \in *G$;
\begin{align*}
 \int_{0}^{1} f(x)\,dx & \tag{Scale to an improper integral} \\
 = \int_{0}^{\nu} f(\frac{x}{\nu})\,d(\frac{x}{\nu})|_{\nu=\infty} &  \\
 = \int_{0}^{\nu} f(\frac{x}{\nu})\frac{1}{\nu}dx|_{\nu=\infty} & \tag{Proposition \ref{P003}; scaling out the infinitesimal change}  \\
 = \sum_{j=1}^{\nu} f(\frac{j}{\nu})\frac{1}{\nu}dj|_{\nu=\infty}+c & \tag{the magnitude of the sum dominates} \\
 = \sum_{j=1}^{\nu} f(\frac{j}{\nu})\frac{1}{\nu}|_{\nu=\infty} 
\end{align*}
% Since all the above operations have unique inverses,
 Reversing the above operations transforms
 the Riemann sum back into the definite integral.
\end{prop}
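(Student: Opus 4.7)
The proof is essentially a chain of four rewrites, each justified by a specific result stated earlier in the paper, and my plan is to verify them one at a time with an eye on which ambient relation (equality, asymptotic equivalence, or equivalence modulo a dominated error) is really in play at each step.

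First, I would derive the initial rewrite $\int_{0}^{1} f(x)\,dx = \int_{0}^{\nu} f(x/\nu)\,d(x/\nu)|_{\nu=\infty}$ by applying the Scaling Theorem with $\alpha = 1/\nu$ (equivalently, substituting $u = \nu x$), so that the bounds $x=0,1$ transform to $u = 0, \nu$, the integrand becomes $f(u/\nu)$, and the differential becomes $d(u/\nu)$. Renaming $u$ back to $x$ gives the displayed form. Next, I would expand the differential using the proposition $d(\alpha x) = \alpha\, dx$ with $\alpha = 1/\nu$, giving $d(x/\nu) = (1/\nu)\,dx$, which yields the second line of the chain.

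The heart of the argument is the third step, where the continuous integral is replaced by a discrete sum. Here I would invoke Definition \ref{DEF004}: by the hypothesis of uniform Riemann integrability, $\int_{0}^{\nu} f(x/\nu)\,dx \sim \sum_{j=1}^{\nu} f(j/\nu)|_{\nu=\infty}$, and after multiplying by $1/\nu$ the interpretation in the Remark allows me to write the discrete increment $dj = (j+1)-j = 1$ as the discrete analogue of $dx$. Proposition \ref{P003} then converts this asymptotic equivalence into an honest equality at the cost of an additive correction $c \in *G$ dominated by both sides. Finally, to close the chain I would observe that since $c \prec \sum_{j=1}^{\nu} f(j/\nu)(1/\nu)|_{\nu=\infty}$, the correction term is absorbed by the sum at $\nu = \infty$, i.e. adding it leaves the asymptotic value unchanged, which is the sense of the fourth equality. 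For the reverse direction, I would simply run each of these substitutions backwards: replace the sum by the scaled integral (Definition \ref{DEF004}), fold $1/\nu$ back into the differential, and undo the scaling to return to $\int_{0}^{1} f(x)\,dx$.

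The main obstacle is the third step, namely making precise the passage from a continuous $\int \,dx$ to a discrete $\sum \,dj$. The equals signs in the chain are being used in the loose sense of equality up to an asymptotically dominated term, and one has to be careful to keep track of the error $c$ introduced by Proposition \ref{P003} and to justify its disappearance by the dominance argument. Once that equivalence is granted, the remaining steps are mechanical applications of the two substitution theorems for integrals stated just before.
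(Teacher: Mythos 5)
Your proposal reconstructs exactly the chain the paper itself uses: the Scaling Theorem with $\alpha = 1/\nu$, the expansion $d(x/\nu) = \frac{1}{\nu}\,dx$, the passage from integral to sum via Definition \ref{DEF004} and Proposition \ref{P003} with the dominated correction $c$, and the absorption of $c$ because the sum's magnitude dominates. This matches the paper's own derivation step for step, including the reversal for the converse direction, so no further comment is needed.
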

\section{Fundamental Theorem of Calculus (FTC)}\label{S03}
 The following proofs are a derivative 
 from those given by D. Joyce \cite{joyce},
 but in $*G$ number system.
 Since this number system includes
 infinitesimals an infinities,
 the domain and space are
 significantly increased.
 For example, FTC is proved for finite
 bounds, but in $*G$, the bounds $[a,b]$ 
 can represent
 improper integrals.
\bigskip
\begin{theo}\label{P017}
 $\mathrm{FTC}^{-1}$ the derivative of an integral of a function is the function. 

$f, F, a, b, x, t \in *G$; 
If $f$ is a continuous
 function on the closed interval $[a,b]$,
 and $F$ is its accumulation function
 defined by
\[ F(x) = \int_{a}^{x} f(t)\,dt \]
 for $x \in [a,b]$, then
 $F$ is differentiable on $[a,b]$
 and its derivative is $f$, that is,
 $F'(x) = f(x)$.
\end{theo}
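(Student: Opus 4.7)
The plan is to adapt the classical argument directly, letting the increment $h$ be an arbitrary infinitesimal in $*G$ rather than passing to a limit. Form the difference quotient and use additivity of the definite integral:
\[
\frac{F(x+h) - F(x)}{h} \;=\; \frac{1}{h}\left(\int_{a}^{x+h} f(t)\,dt - \int_{a}^{x} f(t)\,dt\right) \;=\; \frac{1}{h}\int_{x}^{x+h} f(t)\,dt.
\]
Because $f$ is continuous on the closed interval $[x,x+h]\subseteq[a,b]$, the extreme value theorem supplies points $t_{m},t_{M}\in[x,x+h]$ at which $f$ attains its minimum $m=f(t_{m})$ and maximum $M=f(t_{M})$ on that subinterval. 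Monotonicity of the integral then yields $m\,h \le \int_{x}^{x+h} f(t)\,dt \le M\,h$.

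Dividing through by $h$ (assumed positive; the negative infinitesimal case is symmetric) gives the squeeze
\[
f(t_{m}) \;\le\; \frac{F(x+h) - F(x)}{h} \;\le\; f(t_{M}).
\]
Since $t_{m},t_{M}$ both lie in $[x,x+h]$ with $h$ infinitesimal, each is infinitely close to $x$, so continuity of $f$ at $x$ in the $*G$ sense gives $f(t_{m})\sim f(x)$ and $f(t_{M})\sim f(x)$. Sandwiching, $\frac{F(x+h)-F(x)}{h}\sim f(x)$ for every infinitesimal $h$, which is the defining condition for $F'(x)=f(x)$ in the gossamer setting.

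The main obstacle, and the reason the author flags this proof for its ``interchange of point evaluations,'' is the justification of the final squeeze step. The points $t_{m},t_{M}$ depend on the infinitesimal $h$ and therefore live natively in $*G$ rather than in $\mathbb{R}$; one must be careful that evaluating $f$ at such $h$-dependent points still respects the asymptotic relation $\sim$, so that continuity at the fixed point $x$ transfers to $f(t_{m}),f(t_{M})\sim f(x)$ uniformly as $h$ varies over the infinitesimal monad. A lesser but worth-noting point is the additivity $\int_{a}^{x+h}=\int_{a}^{x}+\int_{x}^{x+h}$ for bounds that may themselves be nonstandard; this should follow from continuity of $f$ on $[a,b]$ via the same transference principle used earlier in the paper, but should be cited rather than assumed silently.
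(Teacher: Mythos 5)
Your argument is sound as an adaptation of the classical squeeze proof, but it takes a genuinely different route from the paper's. The paper never invokes the extreme value theorem: after reducing to $\frac{1}{h}\int_0^h f(t+x)\,dt$ by the same additivity and shifting steps you use, it rescales to $\int_0^1 f(x+ht)\,h\,dt$, expands this as a uniform Riemann sum $h\sum_{k=1}^n f(x+h\tfrac{k}{n})\tfrac{1}{n}|_{n=\infty}$, and then evaluates $h\to 0$ \emph{before} $n\to\infty$, so every summand collapses to $f(x+0)=f(x)$ and the sum becomes $hf(x)$. The ``interchange of point evaluations'' the paper flags is exactly this choice of ordering --- its subsequent Remark shows that the opposite ordering yields $h\int_0^1 f(x)\,dx$ instead --- not the squeeze step you conjectured. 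What your route buys is familiarity and independence from the non-uniqueness-at-infinity machinery; what it costs is reliance on the extreme value theorem on an interval $[x,x+h]$ of infinitesimal length, and on the implication $t\simeq x \Rightarrow f(t)\simeq f(x)$ for $h$-dependent points $t_m,t_M\in *G$, neither of which is established in the paper or its cited parts. This is precisely the classical apparatus the paper's Riemann-sum construction is designed to sidestep. If you keep your argument, you must supply or cite a $*G$ version of the extreme value theorem for infinitesimal closed intervals; otherwise the squeeze has no endpoints.
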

\begin{proof}
 $ h \in *G$; 
 $\,F'(x)$
 $= \frac{ F(x+h)-F(x)}{h}|_{h=0}$
 $= \frac{1}{h} (\int_{a}^{x+h} f(t)\,dt - \int_{a}^{x} f(t)\,dt )|_{h=0}$
 $= \frac{1}{h} \int_{x}^{x+h}f(t)\,dt|_{h=0}$
 $= \frac{1}{h} \int_{0}^{h} f(t+x)\,d(t+x)|_{h=0}$
 $= \frac{1}{h} \int_{0}^{h} f(t+x)\,dt|_{h=0}$
 as $x$ is constant with respect to $t$.

 Use is made of a one variable reaching
 its value before another, hence
 justifying interchanging point evaluations.
 By sending $h \to 0$ before performing
 integration,
 we are investigating infinitely close
 to the point $x$.

 By constructing a Riemann sum,
 integrating infinitesimally close to the point about $x$,
 $h \to 0$ before $dt \to 0$,
 $\int_{0}^{h} f(x+t)\,dt|_{h=0}$
 $= \int_{0}^{1} f(x+ht)\, h dt|_{h=0}$,
 $=h \sum_{k=1}^{n} f(x+h\frac{k}{n})\frac{1}{n}|_{n=\infty}|_{h=0}$
 $=h \sum_{k=1}^{n} f(x+h\frac{k}{n})|_{h=0}\frac{1}{n}|_{n=\infty}|_{h=0}$
 $=h \sum_{k=1}^{n} f(x+0)\frac{1}{n}|_{n=\infty}|_{h=0}$
 $= h f(x)|_{h=0}$.
 Since $h$ is arbitrarily small.

$F'(x)$
 $= \frac{1}{h} h f(x)|_{h=0}$ 
 $=f(x)$
\end{proof}
\bigskip
\begin{remk}
 If the order of variables reaching infinity is interchanged,
 $n \to \infty$ before $h \to 0$,
 we obtain a different sum. This is
 a consequence of non-uniqueness at
 infinity, corresponding with the different possibilities there.

 $h \sum_{k=1}^{n} f(x+h\frac{k}{n})\frac{1}{n}|_{n=\infty}|_{h=0}$
 $=h \sum_{k=1}^{n} f(x+h\frac{k}{n})|_{n=\infty}\frac{1}{n}|_{n=\infty}|_{h=0}$
 $=h \sum_{k=1}^{n} f(x+\frac{k}{n})|_{n=\infty}\frac{1}{n}|_{n=\infty}|_{h=0}$
 $= h \int_{0}^{1} f(x)\,dx|_{h=0}$
\end{remk}
\bigskip
\begin{theo}\label{P018}
 FTC

If $F'$ is continuous
 on $[a,b]$,
 then
\[ \int_{a}^{b} F'(x)\,dx = F(b)-F(a) \]
\end{theo}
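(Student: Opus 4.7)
The plan is to reduce the second form of FTC to the first form (Theorem \ref{P017}) by producing a second antiderivative of $F'$ and comparing. I would introduce the accumulation function $G(x) = \int_{a}^{x} F'(t)\,dt$ on $[a,b]$. Since $F'$ is continuous by hypothesis, Theorem \ref{P017} applies directly and yields $G'(x) = F'(x)$ for every $x \in [a,b]$. Thus $F$ and $G$ are two differentiable functions on $[a,b]$ sharing the same derivative $F'$ throughout the interval, where here $[a,b]$ is interpreted as a $*G$ interval and may represent an improper interval in $\mathbb{R}$.

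Next I would show $G - F$ is constant on $[a,b]$. Set $H(x) = G(x) - F(x)$, so $H'(x) = 0$ on $[a,b]$. I would invoke a mean value argument: for any $x, y \in [a,b]$ there is $\xi$ between them with $H(y) - H(x) = H'(\xi)(y-x) = 0$, giving $H(y) = H(x)$ identically. Hence $G(x) = F(x) + c$ for a single constant $c \in *G$. Evaluating at $x = a$ and using $G(a) = \int_{a}^{a} F'(t)\,dt = 0$ pins down $c = -F(a)$.

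Then the conclusion follows by evaluating at $x = b$:
\[ \int_{a}^{b} F'(x)\,dx \;=\; G(b) \;=\; F(b) + c \;=\; F(b) - F(a). \]

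The main obstacle is the middle step, namely the mean value argument carried out inside $*G$ where $a$ or $b$ may be infinite or infinitesimally separated. In the classical setting this is routine from the MVT, but in $*G$ one needs either a transferred MVT or a direct $*G$ argument that a function with identically zero derivative on a $*G$-interval is constant across the entire extended interval, not merely on each finite sub-interval. Once that is granted, the rest is just arithmetic with $G(a)$ and $G(b)$, so the whole proof really turns on legitimising \emph{equal derivatives implies constant difference} in the gossamer framework.
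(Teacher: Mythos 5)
Your proposal follows essentially the same route as the paper: define the accumulation function $G(x)=\int_a^x F'(t)\,dt$, apply Theorem \ref{P017} to get $G'=F'$, conclude $G=F+c$, and evaluate at $a$ and $b$. The only difference is that you explicitly flag and attempt to justify the step \emph{equal derivatives implies constant difference} via a mean value argument in $*G$, a point the paper's proof passes over with the single word ``integrate''---so your version is, if anything, more careful about the one genuinely delicate step.
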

\begin{proof}
 $G(x) = \int_{a}^{x} F'(t)\,dt$.
 By $\mathrm{FTC^{-1}}$ $G'(x) = F'(x)$,
 integrate then
 $G(x) = F(x)+c$ where $c \in *G$ is constant.
 But $G(a)=0$, $0 = F(a)+c$,
 $c = -F(a)$.

 $G(x) = F(x)-F(a)$,
 $G(b)  = F(b) - F(a)$,
 $\int_{a}^{b} F'(t)\,dt = F(b)-F(a)$
\end{proof}
\section{Integrating the heavyside step function}\label{S04}
Given that between
 any two real numbers, there
 exists another real number,
 may give the impression that the
 real number line is dense and complete.
 However, this is not the case.

At a finer layer, the gossamer numbers $*G$
 fill the gaps between
 the real numbers,
 with infinitesimals, as well as extending
 number line.
\bigskip
\begin{lem} \label{P009}
If $a_{i} \in \mathbb{R}$, $h \in \Phi$;
 then $\sum_{k=1}^{\infty} a_{i} h^{k} \in \Phi$.
\end{lem}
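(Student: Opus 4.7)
The plan is to factor out one power of $h$ and reduce the statement to the claim that an infinitesimal times a finite (non-infinite) element of $*G$ is again infinitesimal. I would first write
\[ \sum_{k=1}^{\infty} a_k h^k \;=\; h \sum_{k=1}^{\infty} a_k h^{k-1} \;=\; h \cdot S, \qquad S := a_1 + a_2 h + a_3 h^2 + \ldots \]
and then argue separately that (a) $S$ is a finite element of $*G$, and (b) multiplication of a finite element by an infinitesimal yields an infinitesimal.

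For step (a), the idea is that since $h \in \Phi$, for every positive real $r$ we have $|h| < r$, so in particular we may choose some real $r < 1$ and bound $|S|$ by the geometric majorant
\[ |S| \;\leq\; |a_1| + |a_2|\,|h| + |a_3|\,|h|^2 + \ldots \;\preceq\; \frac{M}{1-|h|}, \]
where $M$ dominates the real coefficients. Since $1-|h| \sim 1$ and $M$ is real, the right-hand side is finite in $*G$. Equivalently, expressed in the style used earlier in the paper with $\nu \in \mathbb{N}_\infty$ as the upper limit of the sum, one can evaluate $S$ as a geometric-type series inside $*G$ and read off that it is not infinite.

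Step (b) follows from the standard structural fact that $\Phi$ is closed under multiplication by finite elements: if $|S|$ is bounded by a positive real and $|h|$ is smaller than every positive real, then $|hS|$ is smaller than every positive real, i.e.\ $hS \in \Phi$.

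The main obstacle, and the step I would spend care on, is step (a): making precise that the ``infinite'' sum $S$ is indeed finite in $*G$, since naively the series is infinite and one must justify that the $*G$ interpretation of $\sum_{k=1}^{\infty}$ (as a sum up to some $\nu \in \mathbb{N}_\infty$) does not introduce an infinite tail. With bounded coefficients this is immediate from the geometric bound; the worry is only the boundedness of $\{a_k\}$, which I expect is either tacitly assumed here or handled by appealing to the transfer/convergence framework in \cite{cebp11,cebp17} that the authors use elsewhere.
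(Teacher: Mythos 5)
Your proposal is correct and follows essentially the same route as the paper: the paper also posits a real bound $\beta$ with $|a_k|\leq\beta$ for all $k$ and majorizes the sum by $\beta\,\theta$ where $\theta = h+h^2+\cdots = \frac{h}{1-h}\in\Phi$, which is just your $h\cdot S$ decomposition with $S \preceq \frac{M}{1-|h|}$ written in one step. The boundedness of $\{a_k\}$ that you flag as the delicate point is likewise simply asserted (not derived) in the paper's proof, so your concern is well placed but does not distinguish your argument from theirs.
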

\begin{proof}
$\exists \beta \in \mathbb{R}:$ $\forall k$, 
 $|a_{k}| \leq \beta$,
  $\sum_{k=1}^{\infty} a_{i} h^{k} \leq \sum_{k=1}^{\infty} \beta h^{k}$
 
$\theta = h + h^{2} + h^{3} + \ldots$,
 $\theta h = h^{2} + h^{3} + h^{4} + \ldots$,
 $\theta(1-h) = h$, $\theta = \frac{h}{1-h} \in \Phi$

  $\sum_{k=1}^{\infty} a_{i} h^{k} \leq \beta \theta \in \Phi$
\end{proof}
\bigskip
\begin{theo}
 Given $x \in \mathbb{R}$ and $y \in *G$: $x \simeq y$,
 there exist and infinity of numbers infinitesimally
 close to $x$ which transfer $*G \mapsto \mathbb{R}$
 to the unique $x$.
\end{theo}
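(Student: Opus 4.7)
The plan is to split the statement into two independent claims: (i) there exist infinitely many distinct $y \in *G$ with $y \simeq x$, and (ii) every such $y$ transfers under $*G \mapsto \mathbb{R}$ to the same, unique real $x$. Claim (i) produces the promised infinity; claim (ii) delivers the uniqueness of the image in $\mathbb{R}$.

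For (i), I would fix any nonzero infinitesimal $h \in \Phi$ (whose existence is guaranteed by the construction of $*G$ in \cite{cebp11}) and exhibit the family $\{x + h^{k}\}_{k \in \mathbb{N}}$. By Lemma \ref{P009}, applied with the coefficient sequence $a_{1}=1$, $a_{i}=0$ for $i>1$ (or simply by iterating the fact that a product of two infinitesimals is infinitesimal), each $h^{k}$ lies in $\Phi$, so $x + h^{k} \simeq x$. Distinctness of the $h^{k}$ at different orders of magnitude in $*G$ then gives an infinity of gossamer numbers infinitesimally close to $x$. If a larger family is wanted, the set $\{x + \alpha h : \alpha \in \mathbb{R}\}$ works just as well and has the cardinality of the continuum.

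For (ii), I would invoke the definition of the transfer $*G \mapsto \mathbb{R}$: any bounded $y \in *G$ decomposes as $y = r + \epsilon$ with $r \in \mathbb{R}$ and $\epsilon \in \Phi$, and the transfer sends $y \mapsto r$. The decomposition is unique because if $r_{1}+\epsilon_{1}=r_{2}+\epsilon_{2}$ with $r_{i} \in \mathbb{R}$ and $\epsilon_{i} \in \Phi$, then $r_{1}-r_{2}=\epsilon_{2}-\epsilon_{1}$ is simultaneously real and infinitesimal, hence zero. Applied to any $y$ from the family in (i), the difference $y - x$ is an infinitesimal by construction, so $y$ transfers to $x$.

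The main obstacle is bookkeeping rather than mathematics: one must check that the $h^{k}$ really are distinct elements of $*G$ (not collapsed by any quotient in the construction) and that the transfer map intended here agrees with the two-tiered convention used elsewhere in the paper. Once those conventions are cited explicitly, combining (i) and (ii) produces the statement: infinitely many $y \simeq x$, all of which transfer to the unique $x \in \mathbb{R}$.
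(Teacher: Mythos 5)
Your proposal is correct and follows essentially the same route as the paper: both construct an infinite family of infinitesimal perturbations of $x$ and observe that the standard-part/transfer map sends each of them back to the unique $x$. The only differences are cosmetic --- the paper parametrizes the perturbations as series $\delta = \sum_{k=1}^{\infty} a_{k} h^{k}$ and invokes Lemma \ref{P009} to conclude $\delta \in \Phi$, whereas you use the simpler family $x + h^{k}$ (or $x + \alpha h$) and make explicit the uniqueness of the real part of the decomposition, which the paper leaves implicit in writing $\mathrm{st}(y) = x$.
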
 
\begin{proof}
$h, \delta \in \Phi$; $a_{k} \in \mathbb{R}$;
 $\delta = \sum_{k=1}^{\infty} a_{k} h^{k}$,
 $y = x + \delta$,
 taking the standard part
 $\mathrm{st}(y)=x$ as from Lemma \ref{P009}  $\delta \in \Phi$,
 $\mathrm{st}(\delta)=0$.
\end{proof}

The heavyside step function, discontinuous in $\mathbb{R}$ can,
 with infinitesimals,
 be representative of a continuous function in $*G$. 
 There is no contradiction, because during the transfer 
  $*G \mapsto \mathbb{R}$ the continuous function becomes discontinuous as the
 infinitesimals are projected to $0$.

 Let $g$ be the function over the discontinuities in $*G$.
 Let $f_{2} = f + g$. $(*G,f_{2}) \mapsto (\mathbb{R},f)$,
 $(*G,\int \! f_{2}\,dx) \mapsto (\mathbb{R},\int\! f\,dx)$.
 Construct the integral summing the discontinuities,
 which is also
 an infinitesimal
 $\int g\,dx \in \Phi$.

 The transfer from $*G$ to
 $\mathbb{R}$ would discard the infinitesimal sum as $\Phi \mapsto 0$.

\begin{figure}[h]
\centering
\begin{tikzpicture}
  \draw (0,1) to (1.5,1);
  \draw (1.5,1) [dashed] to (1.5,2);
  \draw (1.5,2) to (2.5,2);
  \draw[fill=white] (1.5,2.0) circle(0.07cm);
  \draw[fill=black] (1.5,1.0) circle(0.07cm);
  \node [label={[shift={(1.5cm,0.2cm)}]$q$}] {};
  \node [label={[shift={(1.5cm,2.8cm)}]$f \in \mathbb{R}$}] {};
  \node [label={[shift={(1.5cm,2.0cm)}]$f \not\in C^{0} $}] {};

\begin{scope}[xshift=4cm];
  \draw (0,1) to (1.05,1);
  \draw (1.05,1) to (1.95,2);
  \draw (1.95,2) to (2.5,2);
  \node [label={[shift={(1.5cm,0.2cm)}]$q$}] {};
  \node [label={[shift={(1.5cm,2.0cm)}]$f_{2} \in C^{0}, \;\; f_{2} \not\in C^{1} $}] {};
  \node [label={[shift={(1.5cm,2.8cm)}]$f_{2} \in *G$}] {};
  \draw[arrows=->](1.7,0.2)--(1.95,0.2);
  \draw[arrows=->](1.3,0.2)--(1.05,0.2);
  \node at (1.5,0.2) { \small $2 \epsilon$ };
  \draw[fill=black] (1.95,2.0) circle(0.07cm);
  \draw[fill=black] (1.05,1.0) circle(0.07cm);
\end{scope}
\begin{scope}[xshift=8cm];
  \draw (0.0,1.0) -- (1.0,1.0);
  \draw plot[samples=600,domain=1.0:2.0] function {1.0*1/(1+exp(16.0*(-x+1.5)))+1};
  \draw (2.0,2.0) -- (2.5,2.0);
  \node [label={[shift={(1.5cm,2.0cm)}]$f_{2} \in C^{\infty}$}] {};
  \node [label={[shift={(1.5cm,2.8cm)}]$f_{2} \in *G$}] {};
  \node [label={[shift={(1.5cm,0.2cm)}]$q$}] {};
  \draw[arrows=->](1.7,0.2)--(1.95,0.2);
  \draw[arrows=->](1.3,0.2)--(1.05,0.2);
  \node at (1.5,0.2) { \small $2 \epsilon$ };
  \draw[fill=black] (1.95,2.0) circle(0.07cm);
  \draw[fill=black] (1.05,1.0) circle(0.07cm);
\end{scope}
\end{tikzpicture}
\caption{Discontinuous heavyside step function continuous in $*G$} \label{fig:FIG01}
\end{figure}
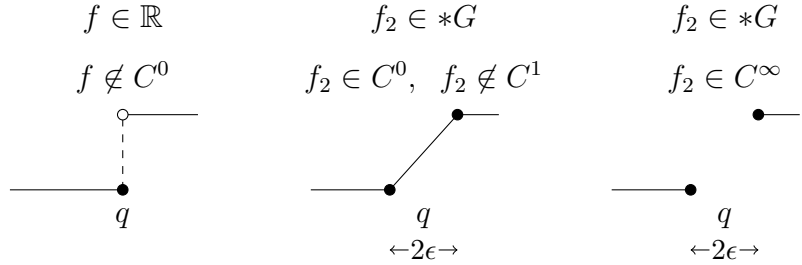

 Let $f$ be continuous, $(*G,f) \mapsto (\mathbb{R},f_{2})$ is not unique.
 The continuous function in $*G$ can
 transfer back to include or not include the discontinuous points.
 The transferred curve may not be a function.

 If we construct a
 straight line in $*G$
 between the discontinuities in $\mathbb{R}$,
 we can show that the $*G$ representation possesses
 the same area when the curve is transferred back. 
 For the heavyside step function, at the discontinuity,
 the infinitesimal triangle area is $\frac{1}{2}(2 \epsilon) \cdot 1 = \epsilon$.

\bigskip
\begin{prop} \label{P011}
For any function with finite or countable discontinuities
 $(\mathbb{R},f): \;\; \exists f_{2}:(*G,f_{2}) \mapsto (\mathbb{R},f)$
\end{prop}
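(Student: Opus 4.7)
The plan is to reduce the construction of $f_2$ to a local problem at each discontinuity. First enumerate the discontinuities of $f$ as $\{q_i\}_{i \in I}$ where $I$ is finite or countable. At each $q_i$ pick a positive infinitesimal $\epsilon_i \in \Phi$ and replace the jump over the infinitesimal interval $[q_i - \epsilon_i, q_i + \epsilon_i]$ by a continuous interpolation (for definiteness, a straight line joining the one-sided limits $f(q_i^-)$ and $f(q_i^+)$, as in the middle panel of Figure \ref{fig:FIG01}). Outside the union of these infinitesimal neighbourhoods, declare $f_2 = f$. By construction the resulting $f_2 \in *G$ is continuous at every point of $*G$.

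Next I would verify the transfer $(*G,f_2) \mapsto (\mathbb{R},f)$ pointwise. For any real $x$ distinct from every $q_i$, points $y \in *G$ infinitesimally close to $x$ lie outside every bridging interval, so $f_2(y) = f(y) = f(x)$ and the standard part returns $f(x)$ unchanged. At $x = q_i$ the bridging interval sits entirely within the infinitesimal neighbourhood of $q_i$, so applying $\mathrm{st}$ to any point in that bridge picks out the single prescribed real value $f(q_i)$, which by Lemma \ref{P009} is consistent with $\epsilon_i \in \Phi$ collapsing to $0$.

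The hard part is the countable case with accumulating discontinuities: if $q_i \to q_\infty \in \mathbb{R}$, individual bridges must be prevented from bleeding into the infinitesimal neighbourhood of $q_\infty$ or overlapping each other. The remedy is to scale the widths so that they decay rapidly in $i$, for instance $\epsilon_i = h/2^i$ for a fixed $h \in \Phi$. Lemma \ref{P009} then certifies each $\epsilon_i$ as an infinitesimal, while disjointness of the bridging intervals follows once the real separation $|q_i - q_j|$ dominates $\epsilon_i + \epsilon_j$, which can always be arranged by shrinking $h$ if necessary. With the bridges so isolated, each infinitesimal neighbourhood of a real point meets at most one bridge, the transfer identity holds pointwise on $\mathbb{R}$, and the proposition follows.
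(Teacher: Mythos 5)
Your construction is far more elaborate than what the paper does here, because you are implicitly proving a stronger statement than the one asked. Proposition \ref{P011} only asserts the \emph{existence} of some $f_{2}$ in $*G$ that transfers back to $f$; it does not require $f_{2}$ to be continuous. The paper's entire proof is the observation that $\mathbb{R}$ is embedded in $*G$, so one may simply take $f_{2}=f$ and the transfer is the identity on the real points. The infinitesimal bridging construction you describe is essentially the paper's proof of the \emph{next} results, Proposition \ref{P012} (continuity to order $\mathbb{C}^{w}$) and Proposition \ref{P001} (equality of areas), so your work is not wasted, merely misplaced.

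One substantive caution if you carry this argument over to \ref{P012}: your claim that at $x=q_{i}$ ``applying $\mathrm{st}$ to any point in that bridge picks out the single prescribed real value $f(q_{i})$'' is not correct as stated. The points of the bridging segment take $y$-values sweeping the whole jump from $f(q_{i}^{-})$ to $f(q_{i}^{+})$, and these $y$-values are real (not infinitesimal) differences, so their standard parts fill the entire real interval between the one-sided limits. The transferred object over $q_{i}$ is therefore multivalued; the paper acknowledges exactly this (``The transferred curve may not be a function'') rather than claiming pointwise recovery of $f(q_{i})$. Your handling of accumulating discontinuities via $\epsilon_{i}=h/2^{i}$ is a sensible refinement that the paper does not spell out, but it does not repair the multivaluedness at the jump itself.
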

\begin{proof}
 Since $\mathbb{R}$ is embedded in $*G$ this is always true.
\end{proof}
\bigskip
\begin{prop} \label{P012}
For any function $(\mathbb{R},f): \;\; \exists f_{2}:(*G,f_{2}) \mapsto (\mathbb{R},f)$ and $f_{2} \in \mathbb{C}^{w}$
\end{prop}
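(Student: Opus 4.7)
The plan is to upgrade Proposition \ref{P011} by replacing each discontinuity of $f$ with a real-analytic bridge over an infinitesimal interval, so that the resulting $f_2$ is smooth (interpreting $\mathbb{C}^w$ as the real-analytic class $C^{\omega}$, consistent with the rightmost $C^{\infty}$ panel of Figure \ref{fig:FIG01}). Since $\mathbb{R}$ is embedded in $*G$, the underlying transfer $*G \mapsto \mathbb{R}$ that takes standard parts will recover $f$ on $\mathbb{R}$ provided every modification lives inside an infinitesimal halo of a standard point.

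Concretely, for each discontinuity $q$ of $f$ I would pick an infinitesimal half-width $\epsilon_q \in \Phi$, chosen small enough that the bridges $[q-\epsilon_q,\,q+\epsilon_q]$ are pairwise disjoint in $*G$ --- always possible, since two distinct standard points are separated by a standard positive real, which dominates every infinitesimal. On each such bridge I would set
\[
  f_2(x) \;=\; f(q^-) + \bigl(f(q^+)-f(q^-)\bigr)\,\sigma\!\bigl((x-q)/\epsilon_q\bigr), \qquad \sigma(t)=\frac{1}{1+e^{-t}},
\]
and take $f_2=f$ outside the bridges, additionally mollifying any remaining non-analytic continuous piece of $f$ by an analytic convolution with an infinitesimally narrow kernel. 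Each $\sigma$ is real-analytic, so $f_2$ is analytic on every bridge; combined with the mollified pieces, $f_2 \in \mathbb{C}^{w}$. For the transfer, Lemma \ref{P009} applied to the series expansion of $\sigma$ near $\pm\infty$ shows that at every real $x$ the bridge values differ from $f(q^{\pm})$ only by quantities in $\Phi$, which $*G \mapsto \mathbb{R}$ sends to $0$; hence $(*G,f_2)\mapsto(\mathbb{R},f)$.

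The main obstacle is the case where the discontinuities of $f$ accumulate or are dense in $\mathbb{R}$, e.g.\ for Dirichlet-type functions, where no family of disjoint infinitesimal bridges suffices. In that setting I would abandon the piecewise construction and instead define $f_2 = f * \rho_{\epsilon}$, the convolution of $f$ with an analytic mollifier $\rho_{\epsilon}$ of infinitesimal width $\epsilon \in \Phi$: the convolution is analytic because $\rho_{\epsilon}$ is, and $\mathrm{st}\bigl((f * \rho_{\epsilon})(x)\bigr) = f(x)$ at every standard $x$ where the convolution converges classically. Verifying that this global $f_2$ is well-defined on all of $*G$ (not merely near $\mathbb{R}$) and that the transfer map is insensitive to the choice of $\rho_{\epsilon}$ is the crux that would need the most attention.
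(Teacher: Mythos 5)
Your construction is essentially the paper's own proof: the paper disposes of Proposition \ref{P012} in two lines by saying that between the ordered discontinuities one constructs a function in $*G$ meeting the desired continuity condition, ``for example, at a singularity, construct an s-curve with infinitesimals'' --- which is exactly your sigmoid bridge $\sigma((x-q)/\epsilon_q)$ over an infinitesimal interval (the rightmost panel of Figure \ref{fig:FIG01} is literally this logistic curve). Your extra care about disjointness of the bridges, mollification of non-analytic continuous pieces, and the dense-discontinuity case goes beyond the paper, whose proof silently assumes the discontinuities are orderable and isolated.
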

\begin{proof}
 Between any discontinuous ordered $x \in \mathbb{R}$ values,
 construct a function in $*G$ that is continuous,
 to the desired continuity condition $\mathbb{C}^{w}$.
 For example, at a singularity, construct an s-curve with infinitesimals.
\end{proof}
 
  \textbf{Condition:} Without loss of generality, consider the curves greater than or equal
 to zero and on a positive domain. $x \in *G: 0 \lt x \lt \infty$
\bigskip
\begin{prop}\label{P001}
 For any piecewise continuous function on a positive interval
 with a finite number of discontinuities,
 in $*G$
 a continuous curve can be constructed,
 that transferred back represents the original curve and
 has the same area.
\end{prop}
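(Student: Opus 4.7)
The plan is to construct $f_2$ by copying $f$ on the continuous pieces and replacing each discontinuity by a continuous bridge over an infinitesimal window, then to argue that the integral of $f_2$ differs from that of $f$ only by an infinitesimal, so the areas agree after transfer back to $\mathbb{R}$.

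First I would fix the discontinuities $q_1 < q_2 < \ldots < q_N$, which by hypothesis are finite in number and real, and choose an infinitesimal $\epsilon \in \Phi$ small enough that the windows $[q_i - \epsilon,\, q_i + \epsilon]$ are pairwise disjoint; this is automatic because successive $q_i$ are separated by positive reals, and every infinitesimal is smaller than any positive real. Outside the union of these windows, set $f_2 := f$. Inside each window, invoke Proposition \ref{P012} to define $f_2$ as a continuous interpolant between $f(q_i - \epsilon)$ and $f(q_i + \epsilon)$; the straight-line bridge illustrated for the heavyside step in Figure \ref{fig:FIG01} is the simplest choice, and in fact any continuous bridge will do. The resulting $f_2$ is continuous on the whole positive interval, lives in $*G$, and transfers to $f$ under $*G \mapsto \mathbb{R}$ because each infinitesimal window contracts to the single real point $q_i$.

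Next I would compare $\int f_2\,dx$ with $\int f\,dx$ piece by piece. On the complement of the windows, the integrands are identical, so those portions contribute equally to both integrals. Within each window $[q_i - \epsilon,\, q_i + \epsilon]$, the bridge $f_2$ is bounded in absolute value by some real constant $M_i$ depending only on the finite one-sided limits of $f$ at $q_i$; hence $\bigl|\int_{q_i-\epsilon}^{q_i+\epsilon} f_2\,dx\bigr| \leq 2\epsilon M_i \in \Phi$. The same estimate applies to $\int_{q_i-\epsilon}^{q_i+\epsilon} f\,dx$ since $f$ is piecewise continuous and thus also bounded on a small neighbourhood of each $q_i$. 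Summing the $N$ window discrepancies (a finite sum of infinitesimals is infinitesimal, in the spirit of Lemma \ref{P009}) gives $\int f_2\,dx - \int f\,dx \in \Phi$. Applying the transfer $*G \mapsto \mathbb{R}$, for which $\Phi \mapsto 0$, yields the claimed equality of areas.

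The main obstacle I anticipate is the bookkeeping in the final transfer step when the positive domain is taken to include infinity and the integrals themselves may be improper: one must be careful that the finite collection of infinitesimal bridge contributions is genuinely negligible relative to a possibly infinite principal integral. This is what forces the hypothesis of \emph{finitely many} discontinuities, because only then is the total bridge correction a finite sum of elements of $\Phi$ and therefore still in $\Phi$; a countable collection of bridges would require a separate convergence argument and would not in general preserve the area under transfer.
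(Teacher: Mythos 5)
Your proposal is correct and follows essentially the same route as the paper: construct the continuous bridge via Proposition \ref{P012} over infinitesimal windows of width $2\epsilon$ at each discontinuity, show the total area contributed by the finitely many bridges lies in $\Phi$, and conclude via the transfer $\Phi \mapsto 0$. The only difference is cosmetic --- you bound each window's contribution by $2\epsilon M_i$ where the paper computes the explicit trapezoid area $|y_{k+1}-y_k|\epsilon + 2\epsilon\,\mathrm{min}(y_{k+1},y_k)$, and you additionally note that $\int f$ itself contributes only an infinitesimal over each window, a point the paper leaves implicit.
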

\begin{proof}
 By Proposition \ref{P012} we can construct a continuous
 curve.
 
 The area under the discontinuities in $*G$ is a trapezoid.
 Let $y_{k}$ be the points of discontinuity,
 let the area at the discontinuity be described
 as a sum of the square and triangle, then 
 let $\epsilon \in \Phi$, 
 $2 \epsilon$ be the interval width, 
 the trapezoid area between the discontinuities 
 $a_{k} = |(y_{k+1}-y_{k})| \epsilon + \mathrm{min}(y_{k+1},y_{k}) 2 \epsilon$. 

 Area of infinitesimal discontinuities
 $\sum_{k=0}^{w-1} a_{k}$
 $= \sum_{k=0}^{w-1} \epsilon (|(y_{k+1}-y_{k})| + 2 \,\mathrm{min}(y_{k+1},y_{k}) )|_{\epsilon=0}$ 
 $= \sum_{k=0}^{w-1} \epsilon |_{\epsilon=0} \in \Phi$, $(*G, \Phi) \mapsto (\mathbb{R},0)$.
\end{proof}
The transfer principle is not only between different number systems,
 but can be applied within $*G$ as it is a `realization' - the truncation
 of terms.
\bigskip
\begin{theo}\label{P002}
 For a given function we can always construct a continuous function in $*G$,
 which can transfer back to the given function.
 If $A$ is the area of the original function,
 and $A'$ is the area of the continuous function, $A' \simeq A$ can be found.
\end{theo}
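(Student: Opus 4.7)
The plan is to bootstrap from Propositions \ref{P012} and \ref{P001}. By Proposition \ref{P012}, for any given $f$ I can construct a continuous $f_2 \in *G$ that transfers back to $f$, by bridging each discontinuity of $f$ with a continuous infinitesimal segment (say, the straight line or s-curve of horizontal width $2\epsilon$, $\epsilon \in \Phi$, used in the proof of Proposition \ref{P001}). On the complement of the bridged points the two functions agree pointwise, so $f_2$ transfers back to $f$ as required. The only thing left to establish is that the area $A'$ of $f_2$ can be chosen to satisfy $A' \simeq A$.

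Next I would write $A' = A + \sum_k a_k$, where $a_k$ is the trapezoidal area of the bridging region at the $k$-th discontinuity. Re-using the estimate from Proposition \ref{P001}, each contribution has the form $a_k = \epsilon(|y_{k+1}-y_k| + 2\min(y_{k+1},y_k))$, so each $a_k \in \Phi$. For a function with only finitely many discontinuities this is a finite sum of infinitesimals, hence itself an element of $\Phi$, and $A' \simeq A$ is immediate by the transfer $(*G,\Phi) \mapsto (\mathbb{R},0)$.

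For the countably-infinite case I would invoke Lemma \ref{P009}. If the jumps are bounded by some real $\beta$, then $\sum_k a_k \le 3\beta\epsilon\sum_k 1$, and choosing $\epsilon$ to be an infinitesimal of sufficiently small order (an order smaller than the reciprocal of the number of bridged points, which we may do freely inside the multi-tiered structure of $*G$) forces the total to remain in $\Phi$. Alternatively, if one arranges the bridge widths to scale like $\epsilon \cdot 2^{-k}$, then $\sum a_k$ is dominated by a geometric expression of exactly the type $\sum \beta h^k$ handled by Lemma \ref{P009}, and again lies in $\Phi$.

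The main obstacle is this last step: keeping $\sum_k a_k$ infinitesimal when discontinuities accumulate or when $f$ is unbounded. The key idea I am relying on is that in $*G$ we have a proper hierarchy of infinitesimals, so $\epsilon$ can always be chosen fine enough that the cumulative bridging area, although it is a sum of many infinitesimal pieces, still falls into $\Phi$; this flexibility is precisely what makes the statement ``$A'\simeq A$ \emph{can be found}'' true rather than merely approximate, and it is what separates the theorem from its finite-discontinuity prototype Proposition \ref{P001}.
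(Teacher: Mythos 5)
Your proposal follows essentially the same route as the paper's own proof: both reduce $A'-A$ to the sum of trapezoidal bridging areas $\sum_{k}\epsilon\left(|y_{k+1}-y_{k}|+2\min(y_{k+1},y_{k})\right)$ and conclude $A'\simeq A$ by choosing $\epsilon$ so that this total lies in $\Phi$. Your explicit treatment of the countable case via Lemma~\ref{P009} and geometrically scaled widths $\epsilon\cdot 2^{-k}$ merely fills in the step the paper compresses into ``since there is no smallest infinitesimal, we can always choose $\epsilon$ to satisfy this condition.''
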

\begin{proof}
Let $a_{k}(x)$ be a continuous function with an infinitesimal width
 that describes the discontinuities of a function in $*G$.
 Consequently $a_{k}(x)$ has the property that $\sum_{k=0}^{w-1} a_{k}(x) \in \Phi$.

 The condition required to be satisfied is     
 $\sum_{k=0}^{w-1} a_{k}(x) \in \Phi$,
 as the infinitesimal realized becomes an additive identity.
 
 The domain need not be finite, for example an improper Riemann sum.
 The area $A$ could be an infinity.

 With infinitesimals, using the linear approximation between discontinuities,
 the area of infinitesimal discontinuities
 $\sum_{k=0}^{w-1} a_{k}$
 $= \sum_{k=0}^{w-1} \epsilon (|(y_{k+1}-y_{k})| + 2 \mathrm{min}(y_{k+1},y_{k}) )|_{\epsilon=0} \in \Phi$
 since there is no smallest infinitesimal, we can always choose $\epsilon$ to satisfy this condition.
\end{proof}
\bigskip
\begin{cor}\label{P005}
If the given curve is differentiable on one side of the discontinuity
 then 
 a curve can be 
 constructed that is differentiable.
\end{cor}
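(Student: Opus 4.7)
The plan is to refine the construction of Theorem \ref{P002} by replacing the piecewise-linear bridge across each discontinuity with a smoother one, chosen so that the derivative matches the existing one-sided derivative on the side where the original curve is differentiable. Since Proposition \ref{P012} already guarantees that between any two ordered real values we may insert a function in $*G$ of arbitrary continuity class $C^w$, the task reduces to specifying the boundary data at each discontinuity in a way that yields a global $C^1$ object rather than merely a $C^0$ one.

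First I would fix a discontinuity $q$ at which, by hypothesis, $f$ is differentiable from (say) the left, so that $f'(q^-)$ exists in $\mathbb{R}$. Let $y_- = f(q^-)$ and $y_+ = f(q^+)$. On the infinitesimal interval $[q,q+2\epsilon]$ with $\epsilon \in \Phi$, I would insert a smooth bridging curve $b(x)$, for instance a Hermite-type interpolant or a rescaled s-curve, subject to the endpoint data
\[
b(q) = y_-, \quad b(q+2\epsilon) = y_+, \quad b'(q) = f'(q^-), \quad b'(q+2\epsilon) = f'(q^+),
\]
where $f'(q^+)$ is whatever derivative value already governs the function immediately to the right of the (now-shifted) discontinuity. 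Such a bridge exists in $*G$ because polynomial or sigmoidal interpolation of four boundary values on an infinitesimal interval is unrestricted; differentiability on the side that was already differentiable in $\mathbb{R}$ is preserved because we have matched the one-sided derivative.

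Next I would verify the hypothesis of Theorem \ref{P002}, namely that the total area contributed by all bridging pieces lies in $\Phi$. Each bridge sits on a width-$2\epsilon$ interval with bounded heights, so its area is at most $\epsilon \cdot 2(|y_+ - y_-| + 2\min(y_-,y_+))$, which is infinitesimal; summing over the finitely or countably many discontinuities (with $\epsilon$ chosen small enough relative to the count) still produces an element of $\Phi$. Hence the transferred curve $(*G,f_2)\mapsto(\mathbb{R},f)$ preserves area up to $\simeq$, and $f_2$ is differentiable throughout its domain in $*G$.

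The main obstacle I anticipate is purely at the junctions: one must exhibit a concrete bridging family whose derivatives at both endpoints are prescribed and which is globally differentiable on the infinitesimal interval; a cubic Hermite polynomial works, but one should check that its derivative values stay appreciable (so that no unintended new singularities are introduced when the curve is pulled back to $\mathbb{R}$). The area estimate and the transfer principle are then routine applications of Lemma \ref{P009} and Theorem \ref{P002}.
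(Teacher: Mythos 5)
Your proposal is correct and follows essentially the same route as the paper, whose entire proof is the single instruction to make the joining function differentiable throughout its infinitesimal domain and at the joins; your Hermite/s-curve construction with matched one-sided derivatives is simply an explicit realization of that idea. The additional verification of the area condition from Theorem \ref{P002} is sound but not part of the paper's (much terser) argument.
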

\begin{proof}
 Have the joining function between the discontinuities be differentiable
 everywhere in its infinitesimal domain and at the joins.  
\end{proof}
\bigskip
\begin{mex}\label{MEX001}
$f(x) = [x \gt q]$ in $\mathbb{R}$ \\

Let $\epsilon \in \Phi$ be an arbitrarily small infinitesimal.
 Let $f(x) \in *G$. 
 $y = \frac{1}{2 \epsilon} x + \frac{1}{2}(\frac{q}{\epsilon}+1)$ \\
$f(x) = [ x \in (q-\epsilon ,q+ \epsilon ] ]y + [x \gt q+\epsilon]$
\end{mex}
\section{FTC for sums}\label{S05}
If we consider the FTC, for a known function $f(x)$ it
 was found that the integral over the interval
 $[a,b]$ was the difference of two integrals
 at the end points.
\bigskip
\begin{defy}\label{DEF003}
Integration at a point 
 \[ F(a) = \int^{a} f(x)\,dx \]
\end{defy}

 Hence, a view that $\int f(x)\,dx$ is a function.
 Then we can consider the FTC as a difference of functions,
 the integrals at a point.
 FTC Theorem \ref{P018} from a point's perspective.
\[ \int_{a}^{b}f(x)\,dx = \int^{b}f(x)\,dx - \int^{a}f(x)\,dx \] 
 From a functional perspective, the integrand arguments are the
 variable parameters. That we can 
 separate the parameters, the difference, allows
 the integral to be effectively integrated at the points
 and be considered separately.
\[ F(a,b) = F(b) - F(a) \]
We can similarly consider a sum at a point.
 For example $1+2+3 + \ldots + n$ at point $n$
 is equal to $G(n)$,
 $G(n) = \frac{n(n+1)}{2}$. 
 A sum at a point is described by a function.
\bigskip
\begin{defy}\label{DEF001}
Summation at a point 
 \[ G(a) = \sum^{a} g_{j} \]
\end{defy}
 Summation and integration at a point could both
 be considered as having a fixed starting point, and
 then the integration/summation at a point
 could be 
 a unique function.
 \[ G(a,b) = G(b) - G(a) \]
 We introduce notation for sums similar to
 notation for integrals. 
\bigskip
\begin{defy}\label{DEF002}
Given $\sum_{a}^{b} g_{k}$ then 
 $\sum^{b} g_{j}$ and  
 $\sum_{a} g_{j} = -\sum^{a} g_{j}$
\end{defy}
\bigskip
\begin{theo}\label{P004} 
 A sum representation of the fundamental
 theorem of calculus. \\
 $a, b \in \mathbb{J}$ or $\mathbb{J}_{\infty}$;
\[ \sum_{k=a}^{b} g_{k} = \sum^{b} g_{k} - \sum^{a} g_{k} \]
\end{theo}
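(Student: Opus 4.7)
The plan is to mirror the proof of FTC (Theorem \ref{P018}) in the discrete setting, with indefinite summation playing the role of antidifferentiation. First I would interpret the summation-at-a-point $G(a) := \sum^{a} g_k$ of Definition \ref{DEF001} as a partial sum from some fixed reference index $s$ up to $a$; the choice of $s$ only shifts $G$ by an additive constant, mirroring the arbitrary constant in an antiderivative. The key algebraic fact is then the splitting of sums over disjoint ranges, $\sum_{k=s}^{b} g_k = \sum_{k=s}^{a-1} g_k + \sum_{k=a}^{b} g_k$, which rearranges immediately to the claim once the lower/upper limit notation of Definition \ref{DEF002} is unpacked.

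Equivalently, and closer in spirit to the continuous proof, one can argue by telescoping. With $G(n) = \sum^{n} g_k$, the forward difference $G(n) - G(n-1) = g_n$ is the discrete analogue of $F'(x) = f(x)$ from Theorem \ref{P017}. Summing both sides from $k=a$ to $k=b$ gives $\sum_{k=a}^{b} g_k = G(b) - G(a-1)$, i.e.\ $\sum^{b} g_k - \sum^{a} g_k$ after absorbing the off-by-one into the convention of Definition \ref{DEF002}. I would present this telescoping version as the main argument, since it parallels the structure of Theorem \ref{P018} (integrate the derivative, evaluate at the endpoints, subtract).

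The main obstacle is the case $a, b \in \mathbb{J}_\infty$, where the bounds lie in the infinite integers of $*G$. For finite indices the splitting (or telescoping) is just a rearrangement of finitely many terms and is immediate; for infinite indices it must be justified in the gossamer setting. I would appeal to the transfer principle stressed in Section \ref{S02}: the identity $\sum_{s}^{b} g_k = \sum_{s}^{a-1} g_k + \sum_{a}^{b} g_k$ holds for all finite $s \le a \le b$, and so lifts to $*G$ (with the understanding that the equality is between the same finite/infinite partial sums, not a claim about convergence). A secondary concern is simply aligning the off-by-one indexing between Definitions \ref{DEF001} and \ref{DEF002} with the inclusive range $\sum_{k=a}^{b}$ on the left of the theorem, but this is bookkeeping rather than substance.
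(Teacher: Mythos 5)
Your proof is essentially correct, but it takes a genuinely different route from the paper. You argue entirely in the discrete setting: the splitting of sums over disjoint ranges, or equivalently the telescoping identity $G(n)-G(n-1)=g_n$ summed from $a$ to $b$, with the transfer principle invoked to lift the finite identity to bounds in $\mathbb{J}_{\infty}$. The paper instead goes \emph{through} the continuous theory: it uses Theorem \ref{P002} to replace the sum by a continuous function in $*G$ with $\sum_{k=a}^{b} g_{k} = \int_{a}^{b+1} f(x)\,dx$, applies the continuous FTC (Theorem \ref{P018}) to that integral, and then identifies the integral-at-a-point with the sum-at-a-point via $\sum^{b} g_{j} = \int^{b+1} f(x)\,dx$ and $\sum^{a} g_{j} = \int^{a} f(x)\,dx$. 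Your argument is more elementary and self-contained, and it makes the discrete analogue of $F'=f$ explicit, which the paper never does; the paper's argument is less direct but is the one that exhibits the sum--integral correspondence that Sections \ref{S02} and \ref{S04} are built around, which is presumably why the authors chose it. One point worth noting in both treatments: the off-by-one you flag as ``bookkeeping'' is real and is not fully resolved by Definitions \ref{DEF001} and \ref{DEF002} --- your telescoping gives $G(b)-G(a-1)$, and the paper's identification is asymmetric ($b+1$ on one endpoint, $a$ on the other) --- so your explicit acknowledgment that the convention must absorb the shift is, if anything, more honest than the paper's ``defined generally as functions.''
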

\begin{proof}
 Transfer the sum to $*G$, by Theorem \ref{P002} construct a continuous function,
 $\sum_{k=a}^{b} g_{k} = \int_{a}^{b+1} f(x)\,dx$.
 Assume the fundamental theorem of calculus is true in $*G$. 
 $\int_{a}^{b+1} f(x)\,dx$
 $= \int^{b+1} f(x)\,dx - \int^{a} f(x)\,dx$.
 Since both the sum and integral at a point
 are defined generally as functions,
 $\sum^{b} g_{j} = \int^{b+1} f(x)\,dx$,
 $\sum^{a} g_{j} = \int^{a} f(x)\,dx$.
\end{proof}

{\em RMIT University, GPO Box 2467V, Melbourne, Victoria 3001, Australia}\\
{\em chelton.evans@rmit.edu.au}
\end{document}